\documentclass[11pt,reqno]{amsart}

\usepackage[T1]{fontenc}
\usepackage{enumitem}
\usepackage{lmodern}
\usepackage{amsmath,amssymb,mathtools}
\usepackage{booktabs}
\usepackage{microtype}
\usepackage[colorlinks=true,linkcolor=blue,citecolor=blue,urlcolor=blue]{hyperref}

\newcommand{\Aut}{\operatorname{Aut}}
\newcommand{\SL}{\operatorname{SL}}
\newcommand{\PSL}{\operatorname{PSL}}
\newcommand{\AGL}{\operatorname{AGL}}
\newcommand{\PSU}{\operatorname{PSU}}
\newcommand{\Sp}{\operatorname{Sp}}
\newcommand{\OmegaG}{\operatorname{\Omega}}
\newcommand{\Sym}{\operatorname{S}}
\newcommand{\Alt}{\operatorname{A}}
\newcommand{\Dih}{\operatorname{D}}
\newcommand{\Syl}{\operatorname{Syl}}
\newcommand{\Cen}{\mathbf{C}}
\newcommand{\Nor}{\mathbf{N}}
\newcommand{\Op}{\mathbf{O}}
\newcommand{\M}{\operatorname{M}}

\newtheorem{theorem}{Theorem}[section]
\newtheorem{lemma}[theorem]{Lemma}
\newtheorem{proposition}[theorem]{Proposition}
\newtheorem{corollary}[theorem]{Corollary}
\theoremstyle{definition}
\newtheorem{definition}[theorem]{Definition}

\title[Solvable supplements]{Solvable Supplements to Normalizers of Cyclic $2$-Subgroups}

\author[Qiao]{Shou Hong Qiao}
\address{School of Mathematics and Statistics\\
Guangdong University of Technology\\
Guangzhou 510520\\
P.~R.~China}
\email{qshqsh513@163.com}

\author[Xia]{Binzhou Xia}
\address{School of Mathematics and Statistics\\
The University of Melbourne\\
Parkville, VIC 3010\\
Australia}
\email{binzhoux@unimelb.edu.au}

\subjclass[2020]{Primary 20D10; Secondary 20D20, 20D40}
\keywords{Solvable group, supplement, $2$-subgroup, almost simple group, group factorization}

\begin{document}

\begin{abstract}
Amberg and Kazarin proved that a finite group is solvable if the normalizer of every cyclic subgroup of prime power order has a solvable supplement. We substantially relax this hypothesis by requiring it only for cyclic $2$-subgroups. This condition, denoted by $\mathrm{SSN}_2$, sharply restricts the nonabelian composition factors of the group to the family $\PSL_2(q)$, where $q\geq7$ is a prime power satisfying $q\equiv3\pmod4$. Conversely, this family is precisely the nonabelian finite simple groups that satisfy $\mathrm{SSN}_2$. Consequently, a finite group satisfying $\mathrm{SSN}_2$ is solvable if and only if it has no section isomorphic to one of these groups.
\end{abstract}

\maketitle

\section{Introduction}

Throughout this paper all groups are finite. If a group $G$ can be written as $G=HK$ for subgroups $H$ and $K$, then $K$ is called a \emph{supplement} to $H$ in $G$.

Hall's characterization of solvable groups~\cite{Hall1937Characteristic}, together with later contributions of Wielandt~\cite{Wielandt1958} and Kegel~\cite{Kegel1961}, established deep connections between solvability and the existence of suitable supplements. Baer~\cite{Baer1953} characterized the finite groups in which the centralizer of every element of prime power order has a supplement of prime power order; in particular, all such groups are solvable. This direction was later developed by Berkovich--Kazarin~\cite{BK2005}, who considered groups where the normalizer of every subgroup of prime power order admits a supplement of prime power order. Amberg--Kazarin~\cite{AK2005} substantially weakened these assumptions by requiring only that the normalizer of every cyclic subgroup of prime power order admit a solvable supplement. Even when not forcing solvability, some supplement conditions may instead severely restrict the nonabelian composition factors of the group. For example, Baumeister~\cite{Baumeister1999} proved that if every maximal subgroup of $G$ has a solvable supplement in $G$, then every nonabelian composition factor of $G$ is isomorphic to $\PSL_2(q)$ for some $q\in\{5,7,9,11\}$.

Our starting point is the following theorem of Amberg and Kazarin~\cite{AK2005}.

\begin{theorem}[Amberg--Kazarin~{\cite{AK2005}}]\label{thm:amberg-kazarin}
Let $G$ be a finite group. If the normalizer of every cyclic subgroup of prime power order has a solvable supplement in $G$, then $G$ is solvable.
\end{theorem}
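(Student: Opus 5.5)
We argue by minimal counterexample. Let $G$ be a counterexample of least order. The first step is to check that the hypothesis passes to quotients. If $N\trianglelefteq G$ and $\bar x\in G/N$ has prime power order, we choose a preimage $x$ of prime power order; this is possible by taking a suitable power of any preimage. The image of $\Nor_G(\langle x\rangle)$ lies in $\Nor_{G/N}(\langle\bar x\rangle)$. So if $G=\Nor_G(\langle x\rangle)S$ with $S$ solvable, then $SN/N$ is a solvable supplement to $\Nor_{G/N}(\langle \bar x\rangle)$.

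Hence every proper quotient of $G$ is solvable. It follows that $G$ has a unique minimal normal subgroup $N$, and that $N$ is nonsolvable, since otherwise $G$ would be solvable. Thus $N=T_1\times\cdots\times T_k$ is a direct product of copies of a nonabelian simple group $T$, and $G/N$ is solvable.

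The next step is to take $x$ of order $p$ in $N$ and use a solvable supplement $S$ with $G=\Nor_G(\langle x\rangle)S$. If $\Nor_G(\langle x\rangle)$ were all of $G$, then $\langle x\rangle$ would be a normal subgroup of $G$ contained in $N$, which is impossible. Therefore $\Nor_G(\langle x\rangle)$ is a proper subgroup. The factorization tells us that $S$ is transitive on the $G$-conjugacy class $x^G$, so $|x^G|$ divides $|S|$. Moreover $G=\Nor_G(\langle x\rangle)N\,S$, and intersecting with $N$ gives a factorization of $N$, or of a suitable projection onto one factor $T_i$. In this way we obtain an almost simple group $A$ with socle $T$, where $T\le A\le\Aut(T)$, together with a factorization $A=\Nor_A(\langle y\rangle)\,R$ in which $R$ is solvable and $y\in T$ has prime order. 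Here $p$ is allowed to vary over all primes dividing $|T|$.

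The main step is to rule this out for every nonabelian simple group $T$, using the classification of factorizations of almost simple groups with a solvable factor due to Li and Xia, or Kazarin's work on factorizations with a solvable factor. The strategy is to choose $p$ so that $\Nor_A(\langle y\rangle)$ is small. A good choice is a primitive prime divisor of a suitable $q^e-1$, or the largest prime dividing $|T|$. Then $\Nor_A(\langle y\rangle)$ is a small subgroup, roughly the normalizer of a torus, and it cannot appear as a factor in any factorization with a solvable partner, because the order $|A|/|\Nor_A(\langle y\rangle)|$ is too large to divide the order of any solvable subgroup of $A$. Sporadic and alternating groups are handled in the same way. For $\Alt_n$, we take $p$ to be the largest prime at most $n$ and argue that a solvable subgroup transitive on the $p$-cycles is impossible. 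For $T=\PSL_2(q)$, we use Dickson's list of subgroups directly.

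I expect this case analysis over the classification, and in particular finding one good prime for every Lie type, to be the principal obstacle. A second, smaller technical point is transferring the factorization from $G$ down to the almost simple section $A$ while keeping the normalizer of a cyclic $p$-subgroup as one of the factors. To handle it I would pass to $\Nor_G(T_1)$ and consider its action on $T_1$.
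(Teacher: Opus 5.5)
\textbf{The gap.} The paper quotes this theorem from~\cite{AK2005} without proof, so your proposal has to stand on its own, and it has a genuine gap. The step carrying all the content is left as a strategy, and the mechanism you propose for it is false. That step is to show that no almost simple $A$ with socle $T$ has solvable supplements to $\Nor_A(\langle y\rangle)$ for every $y\in T$ of prime order. Your mechanism is to take $p$ to be the largest prime, or a primitive prime divisor, and argue that $|A:\Nor_A(\langle y\rangle)|$ is then too large to divide the order of a solvable subgroup. This fails in several cases:
\begin{enumerate}[label=\textup{(\roman*)},leftmargin=*]
\item $\PSL_3(3)=(13{:}3)\bigl(3^2{:}\mathrm{GL}_2(3)\bigr)$, because a Singer cycle is transitive on points and the point stabilizer is solvable. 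Here $13$ is both the largest prime and the only primitive prime divisor available.
\item $\M_{11}=(11{:}5)(\M_9{:}2)$, because the two factors have coprime orders $55$ and $144$, with product $|\M_{11}|$.
\item In $\PSL_3(8){:}3$, the Singer normalizer $73{:}9$ acts regularly on the $657$ flags, so the solvable flag stabilizer supplements it.
\end{enumerate}
These are among the solvable-by-solvable factorizations of~\cite[Table~4.1]{LX2022} that the paper meets in Case~1 of Proposition~\ref{prop:almost}. Even inside $\PSL_2(q)$ the choice is delicate. Since $\PSL_2(11)=(11{:}5)\Alt_4=(11{:}5)\Dih_{12}$, the primes $p=11,3,2$ all fail there, and only $p=5$ works. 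So no uniform size-based choice of prime exists. Without an actual case-by-case check against a classification, your argument establishes only the reduction.

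\textbf{The reduction and a way to finish.} The reduction is sound in outline and is essentially Lemmas~\ref{lem:quotient} and~\ref{lem:minimal-normal}, but two slips need fixing.
\begin{itemize}
\item $G=\Nor_G(\langle x\rangle)S$ makes $S$ transitive on the conjugates of the \emph{subgroup} $\langle x\rangle$. So it is $|G:\Nor_G(\langle x\rangle)|$ that divides $|S|$, not $|x^G|=|G:\Cen_G(x)|$.
\item Intersecting with $N$ gives nothing, since neither factor lies in $N$. Instead take $x\in T_1$. Then $1\ne\langle x\rangle\le T_1\cap T_1^g$ forces $\Nor_G(\langle x\rangle)\le\Nor_G(T_1)$, so you can apply Dedekind's law in $\Nor_G(T_1)$ and pass to $\Nor_G(T_1)/\Cen_{\Nor_G(T_1)}(T_1)$.
\end{itemize}
For the main step, the paper's machinery avoids hunting for a good prime family by family. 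Use $p=2$, as in Proposition~\ref{prop:almost}. For a $2$-central involution $z$, the centralizer $\Cen_A(z)$ contains a Sylow $2$-subgroup, so Lemma~\ref{lem:pprime-factor} makes the supplement of odd order. The classification of~\cite{LX2022}, plus one further involution in the orthogonal case, then leaves only $\PSL_2(q)$ with $q\equiv3\pmod4$. For these groups, take $y$ of odd prime order dividing $(q-1)/2$; this exists because $(q-1)/2$ is odd and at least $3$. Then $\Nor_A(\langle y\rangle)\le\Nor_A(\Cen_T(y))$, which has index $q(q+1)/2$ in $A$, so $q(q+1)/2$ divides the order of any supplement. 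Dickson's list, together with $|A:T|\mid 2f$ for $q=r^f$, shows that no solvable subgroup of $A$ has order divisible by $q(q+1)/2$.
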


In this paper, we substantially weaken the hypothesis of Theorem~\ref{thm:amberg-kazarin} and investigate what remains of its conclusion. For convenience, we introduce the following property.

\begin{definition}
Let $p$ be a prime. We say that a finite group $G$ has property $\mathrm{SSN}_p$ if, for
every cyclic $p$-subgroup $C\leq G$, its normalizer $\Nor_G(C)$ has a solvable supplement in $G$.
\end{definition}

Thus, Theorem~\ref{thm:amberg-kazarin} says that a finite group is solvable if it satisfies $\mathrm{SSN}_p$ for every prime divisor $p$ of its order. Note that the converse is trivially true: a finite solvable group $G$ has $\mathrm{SSN}_p$ for every prime divisor $p$ of $|G|$, by taking the supplement to be $G$.

Our main result Theorem~\ref{thm:main} shows that, somewhat surprisingly, the single condition $\mathrm{SSN}_2$ already comes remarkably close to the all-primes solvability criterion established in Theorem~\ref{thm:amberg-kazarin}. Although it does not force $G$ to be solvable, it restricts every nonabelian composition factor of $G$ to the family $\PSL_2(q)$ with $q\geq7$ and $q\equiv3\pmod4$.

\begin{theorem}\label{thm:main}
Let $G$ be a finite group with property $\mathrm{SSN}_2$. Then every nonabelian composition factor of $G$ is isomorphic to $\PSL_2(q)$ for some prime power $q\geq7$ such that $q\equiv3\pmod4$.
\end{theorem}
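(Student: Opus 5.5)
We reduce Theorem~\ref{thm:main} to almost simple groups and then use the classification of factorizations of almost simple groups with a solvable factor.

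\textbf{Step 1 (inheritance).} We first show that $\mathrm{SSN}_2$ passes to quotients and, in a controlled way, to normal subgroups. For a quotient $\bar G=G/N$, let $\bar C$ be a cyclic $2$-subgroup of $\bar G$. Choose a $2$-element $x\in G$ whose image generates $\bar C$; this is possible by taking the $2$-part of any preimage of a generator. Then $\Nor_G(\langle x\rangle)N/N\le\Nor_{\bar G}(\bar C)$. Hence if $G=\Nor_G(\langle x\rangle)S$ with $S$ solvable, then $\bar G=\Nor_{\bar G}(\bar C)\,\bar S$, and $\bar S$ is solvable. So $\mathrm{SSN}_2$ is quotient closed.

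To reach a nonabelian composition factor $T$, take a chief series and a chief factor $M/N\cong T^k$. Passing to $G/N$, we may assume $N=1$ and that $M=T^k$ is a minimal normal subgroup. Now pick a cyclic $2$-subgroup $C\le T_1$, the first direct factor, and write $G=\Nor_G(C)S$. Intersecting with $\Nor_G(T_1)$ and projecting to $\Nor_G(T_1)/\Cen_G(T_1)$, which is almost simple with socle $T$, we aim to get the following: the almost simple group $A$ with $T\le A\le\Aut(T)$ satisfies $A=\Nor_A(C)\,R$ with $R$ solvable, for every cyclic $2$-subgroup $C\le T$.

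The delicate point is that a factorization $G=HS$ need not restrict to the subgroup $\Nor_G(T_1)$. We handle this with the standard argument (as in Baumeister and in Amberg--Kazarin): $S$ acts on the $k$ factors, $\Nor_G(C)\le\Nor_G(T_1)$ when $C\ne1$, and so $S$ must be transitive on the factors. A Frattini-type computation then gives $\Nor_G(T_1)=\Nor_G(C)\,(S\cap\Nor_G(T_1))$.

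\textbf{Step 2 (almost simple analysis).} We now have $A=\Nor_A(C)R$ with $R$ solvable, for every cyclic $2$-subgroup $C\le T$. Take $C=\langle t\rangle$ with $t$ an involution of $T$. Then $\Nor_A(C)=\Cen_A(t)$ is a proper subgroup, so it lies in a maximal subgroup $B$ of $A$ not containing $T$. This yields a factorization $A=BR$ with $R$ solvable and $B\ge\Cen_A(t)$.

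We now invoke the classification of factorizations of almost simple groups with a solvable factor (Li--Xia), combined with the Liebeck--Praeger--Saxl classification of maximal factorizations. A maximal subgroup containing an involution centralizer is large, so the possible pairs $(T,B)$ are few: linear groups with $B$ a parabolic subgroup, a few symplectic and orthogonal cases, $\Alt_n$ with $B=\Alt_{n-1}$, and sporadic cases. For each we test the requirement against involution classes, and against elements of order $4$ or larger cyclic $2$-subgroups. For example, in $\Alt_n$ an involution with many transpositions is not contained in any point stabilizer. Parabolic cases in higher rank fail because some involution centralizer, or the normalizer of some cyclic $2$-subgroup, is not contained in any factor admitting a solvable complement.

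\textbf{Step 3 ($\PSL_2(q)$).} For $T=\PSL_2(q)$ the solvable factors are Borel subgroups and a few small exceptions. We require that $\Nor_A(C)R=A$ for every cyclic $2$-subgroup $C$. Take $C$ to be a Sylow-type cyclic $2$-subgroup lying in the dihedral group $\Dih_{q\pm1}$. If $q$ is even, $\Nor_A(C)$ lies in a Borel subgroup, which is a $2$-local, and no solvable supplement exists. If $q\equiv1\pmod4$, the normalizer of an involution is dihedral of order $q+\varepsilon$ containing a Sylow $2$-subgroup. Order considerations then force $R$ to have order divisible by $q(q-\varepsilon)/2$, with index too small, and the Borel subgroup has the wrong shape to supplement. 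In the case $q\equiv3\pmod4$, the involution centralizer $\Dih_{q+1}$ supplements the Borel subgroup, so these groups survive. Small cases $q\le5$ are excluded or coincide with groups already covered, since $\PSL_2(5)\cong\PSL_2(4)$.

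\textbf{Main obstacle.} I expect the main obstacle to be the case-by-case elimination in Step 2 across all families from the Li--Xia classification. This is especially so for the orthogonal and symplectic groups in characteristic $2$ and for the exceptional small cases, where involution centralizers can lie in large factorizable subgroups. There one must choose a cleverer cyclic $2$-subgroup, for instance an element of order $4$, to destroy the factorization.
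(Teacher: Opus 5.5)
Your Step~1 is sound and is essentially Lemmas~\ref{lem:quotient} and~\ref{lem:minimal-normal} of the paper. The ``delicate point'' is just the Dedekind law, since $\Nor_G(C)\le\Nor_G(T_1)$; transitivity of $S$ plays no role.

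The genuine gap is Step~2. It carries the whole content of the theorem, and you only describe it as a programme (``for each we test the requirement against involution classes''). As formulated, the programme also discards the information needed to finish. Replacing $\Cen_A(t)$ by a maximal overgroup $B$ with $A=BR$ leaves many configurations alive. For instance, let $n\equiv1\pmod4$ and let $t\in\Alt_n$ be an involution with exactly one fixed point. Then $\Cen_{\Alt_n}(t)$ lies in the point stabilizer $\Alt_{n-1}$, and $\Alt_n=\Alt_{n-1}\langle c\rangle$ for an $n$-cycle $c$. So your test does not exclude $t$, and you would have to hunt for a better involution family by family. Moreover, a maximal subgroup containing $\Cen_A(t)$ may contain $T$. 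The right move is to apply the Li--Xia theorem directly to the core-free factor $\Cen_A(t)$, which constrains a normal subgroup of $\Cen_A(t)\cap T$ rather than an overgroup.

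Step~3 has the same defect. For $q$ even, a Borel subgroup of $\SL_2(q)$ \emph{does} have a solvable supplement, namely a cyclic subgroup of order $q+1$ acting regularly on the projective line. So the obstruction must come from the much smaller group $\Cen_T(t)$ of order $q$, via a counting argument you do not give. The case $q\equiv1\pmod4$ is only gestured at.

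The missing idea is the choice of involution that makes the classification collapse.
\begin{enumerate}[label=(\roman*)]
\item Take $P\in\Syl_2(A)$ and an involution $z\in\mathbf{Z}(P)\cap T$, which exists because $1\neq P\cap T\trianglelefteq P$.
\item Then $K=\Cen_A(z)$ contains a Sylow $2$-subgroup of $A$. A Hall $2'$-subgroup argument inside the solvable factor (Lemma~\ref{lem:pprime-factor}) gives $A=QK$ with $Q$ solvable of \emph{odd} order.
\item This yields sharp filters: $|T:K\cap T|$ is odd, $\langle z\rangle\le\mathbf{Z}(K\cap T)\cap\Op_2(K\cap T)$, and $z$ centralizes every subgroup $K_0\le K\cap T$ named in the classification.
\end{enumerate}
With these filters each family dies quickly.
\begin{itemize}
\item The odd order of $Q$, together with the value of $|\Cen_T(z)|$, disposes of the table with both factors solvable.
\item For $\Alt_n$ with $n\ge7$, $z$ would have to centralize some $\Alt_{n-k}$ with $k\le3$, which is impossible.
\item The sporadic candidates have $\mathbf{Z}(K)=1$.
\item In the classical cases, three conditions do the work: oddness of $|T:\Nor_T(K\cap T)|$, $\Cen_T(K_0)=1$, and $\mathbf{Z}\cap\Op_2=1$. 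They leave only $\PSL_2(q)$ and $\OmegaG_{2m+1}(q)$ with $K_0=\OmegaG^-_{2m}(q)$. The latter is killed by a second involution $y=-I_2\oplus I_{2m-1}$.
\end{itemize}
The same device settles $\PSL_2(q)$. For $q=2^f$ one gets $q^2-1\le|Q|\le(q+1)f$, a contradiction. For $q\equiv1\pmod4$, no factorization $A=Q\,\Cen_A(z)$ with $|Q|$ odd and $\Cen_T(z)\cong\Dih_{q-1}$ occurs in the Li--Xia classification.
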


Theorem~\ref{thm:main} yields the following precise solvability criterion.

\begin{corollary}\label{cor:solvability}
Let $G$ be a finite group with property $\mathrm{SSN}_2$. Then the following
are equivalent:
\begin{enumerate}[label=\textup{(\alph*)},leftmargin=*]
\item\label{item:solvability-solvable} $G$ is solvable;
\item\label{item:solvability-composition-factor} $G$ has no composition
factor isomorphic to $\PSL_2(q)$ for any prime power $q\geq7$ such that
$q\equiv3\pmod4$;
\item\label{item:solvability-section} $G$ has no section isomorphic to
$\PSL_2(q)$ for any prime power $q\geq7$ such that $q\equiv3\pmod4$.
\end{enumerate}
\end{corollary}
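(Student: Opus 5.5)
The plan is to prove the cycle of implications (a)$\Rightarrow$(c)$\Rightarrow$(b)$\Rightarrow$(a). Only the last one uses the hypothesis $\mathrm{SSN}_2$, and only through Theorem~\ref{thm:main}; the other two are standard facts about sections and composition series.

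For (a)$\Rightarrow$(c), I would use that solvability is inherited by subgroups and quotients. Every section $H/K$ of a solvable group $G$, with $K\trianglelefteq H\le G$, is therefore solvable. Each $\PSL_2(q)$ with $q\geq7$ is nonabelian simple, hence not solvable, so it cannot occur as a section of $G$.

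For (c)$\Rightarrow$(b), I would argue contrapositively. A composition factor $G_{i}/G_{i+1}$ of a composition series $1=G_r\trianglelefteq\cdots\trianglelefteq G_0=G$ is a section of $G$. So if some composition factor were isomorphic to $\PSL_2(q)$ with $q\geq7$ and $q\equiv3\pmod4$, that group would be a section of $G$, contradicting (c).

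For (b)$\Rightarrow$(a), I would use that a finite group is solvable exactly when all its composition factors are abelian, that is, cyclic of prime order. Assume (b) and suppose, for a contradiction, that $G$ has a nonabelian composition factor $S$. Since $G$ has $\mathrm{SSN}_2$, Theorem~\ref{thm:main} gives $S\cong\PSL_2(q)$ for some prime power $q\geq7$ with $q\equiv3\pmod4$, which contradicts (b). Hence every composition factor is abelian and $G$ is solvable.

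There is no genuine obstacle here, since all the depth lies in Theorem~\ref{thm:main}. The one point that needs care is that (c)$\Rightarrow$(b) is stated in the right direction: a composition factor is a special case of a section, so excluding sections is the stronger condition. The implication that actually requires $\mathrm{SSN}_2$ is (b)$\Rightarrow$(a).
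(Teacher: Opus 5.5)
Your proposal is correct and follows the same route as the paper: the cycle (a)$\Rightarrow$(c)$\Rightarrow$(b)$\Rightarrow$(a). The first two implications are elementary, and only (b)$\Rightarrow$(a) invokes $\mathrm{SSN}_2$, via Theorem~\ref{thm:main}.
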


We note that the choice $p=2$ is structural when considering whether property $\mathrm{SSN}_p$ implies solvability. Indeed, for any fixed odd prime $p$, every nonabelian finite simple group whose order is not divisible by $p$ satisfies $\mathrm{SSN}_p$ vacuously.
Moreover, the next theorem shows that the family $\PSL_2(q)$ appearing in Theorem~\ref{thm:main} and Corollary~\ref{cor:solvability} cannot be narrowed.

\begin{theorem}\label{thm:simple}
Let $S$ be a finite nonabelian simple group. Then $S$ has property $\mathrm{SSN}_2$ if and only if $S\cong\PSL_2(q)$ for some prime power
$q\geq7$ such that $q\equiv3\pmod4$.
\end{theorem}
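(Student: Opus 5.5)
We prove the two directions separately. The sufficiency direction is an explicit factorization; the necessity direction goes through the classification of factorizations of almost simple groups, and that step carries most of the work.

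\emph{Sufficiency.} Let $S=\PSL_2(q)$ with $q=p^f\equiv3\pmod4$ and $q\ge7$. Then $f$ is odd and $p\equiv3\pmod4$. Moreover $|S|_2=4$, and a Sylow $2$-subgroup of $S$ is a Klein four-group.

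The nontrivial cyclic $2$-subgroups of $S$ are therefore generated by involutions, and all involutions are conjugate in $S$. So it suffices to treat one subgroup $C=\langle t\rangle$ of order $2$. Its normalizer is $\Nor_S(C)=\Cen_S(t)\cong\Dih_{q+1}$.

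Let $B=\Nor_S(P)$ be a Borel subgroup, where $P$ is a Sylow $p$-subgroup. Then $B\cong C_p^f\rtimes C_{(q-1)/2}$ is solvable and $|B|=q(q-1)/2$. The product $\Cen_S(t)B$ has order $|\Cen_S(t)||B|/|\Cen_S(t)\cap B|$. Since $(q-1)/2$ is odd, $B$ has odd order and contains no involution. Also $\gcd(q+1,q(q-1)/2)$ divides $\gcd(q+1,q-1)=2$, so $\Cen_S(t)\cap B=1$. Hence
\[
|\Cen_S(t)B|=(q+1)\cdot q(q-1)/2=|S|,
\]
so $S=\Cen_S(t)B$ and $B$ is a solvable supplement.

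The trivial subgroup $C=1$ has normalizer $S$, so it needs no supplement. Thus $S$ has $\mathrm{SSN}_2$.

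\emph{Necessity.} Assume $S$ is nonabelian simple with $\mathrm{SSN}_2$. Pick an involution $t\in S$. Then $S=\Cen_S(t)K$ with $K$ solvable. Note that $K\neq S$, since $S$ is not solvable. Also $\Cen_S(t)\ne S$, since $S$ has trivial center. So $S=AK$ is a nontrivial factorization in which $K$ is solvable and $A=\Cen_S(t)$ is an involution centralizer.

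We then invoke the classification of factorizations of finite simple groups with a solvable factor, due to Li--Xia (building on Liebeck--Praeger--Saxl). Enlarging $A$ and $K$ to maximal subgroups, we get one of:
\begin{enumerate}
\item $A$ has prime index;
\item $S$ is alternating with $A$ inside a point-stabilizer type subgroup;
\item $S$ is one of the explicit classical or sporadic cases, with a short list of possible $A$.
\end{enumerate}

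We go through this list and check, case by case, whether an involution centralizer can lie in the relevant factor $A$ with the required index. The comparison uses:
\begin{itemize}
\item the known structure of involution centralizers;
\item the requirement that $|S:\Cen_S(t)|$ divide $|K|$, so that $|S|/|K|$ divides $|\Cen_S(t)|$.
\end{itemize}

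The freedom to choose $t$ should be used in every case. Taking $t$ to be an involution with the \emph{smallest} centralizer makes $|S:\Cen_S(t)|$ large. In most cases this already exceeds the largest possible order of a solvable subgroup, or violates divisibility.

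Examples of what the case check must rule out:
\begin{itemize}
\item For $\Alt_n$, a product of $2\lfloor n/4\rfloor$ transpositions should be incompatible with the factorizations $\Alt_n=\Alt_{n-1}K$.
\item For $\PSL_2(q)$ with $q$ even or $q\equiv1\pmod4$, the centralizer is a Sylow $2$-subgroup normalizer, of order $q$ or $q-1$. Elementary order arguments should exclude a solvable supplement. For example, with $q$ even one would need a solvable subgroup of order divisible by $(q^2-1)$, contradicting Dickson's list.
\item The remaining survivors should be exactly $\PSL_2(q)$ with $q\equiv3\pmod4$. Small coincidences such as $\Alt_5$, $\Alt_6$, $\PSL_3(2)\cong\PSL_2(7)$ and $\PSL_2(9)$ need care.
\end{itemize}

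The main obstacle is this case analysis over the classification of factorizations with a solvable factor. In particular, for groups of Lie type of larger rank (e.g.\ $\PSL_n(q)$ with $K$ inside a Singer normalizer, $\Sp_{2m}(q)$, $\OmegaG_{2m}^+(q)$, and the $G_2(q)$ cases) one must show that no involution centralizer fits into the non-solvable factor. I would first try a uniform order argument. Compare $|S|/|\Cen_S(t)|$ for a suitably chosen involution $t$ with the maximal order of solvable subgroups listed in the classification. Explicit checks would be left only for finitely many small or sporadic cases.
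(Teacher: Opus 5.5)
\textbf{Sufficiency.} Your argument rests on the claim $|S|_2=4$, and this is false in general. For $q\equiv3\pmod4$ one has $|S|_2=(q+1)_2$, which is at least $8$ whenever $q\equiv7\pmod8$. For example, $\PSL_2(7)$ has Sylow $2$-subgroup $\Dih_8$ and elements of order $4$, and the same happens for $q=23,31,\dots$. So cyclic $2$-subgroups of order at least $4$ are not covered by your reduction to involutions.

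The repair is short, and it is essentially what the paper does. The group $M=\Cen_S(t)\cong\Dih_{q+1}$ contains a Sylow $2$-subgroup of $S$. Every element of $M$ of order at least $3$ lies in its cyclic subgroup $T$ of order $(q+1)/2$. Hence a cyclic $2$-subgroup $H$ with $|H|\geq4$ is conjugate to a subgroup of $T$. Then $M\leq\Nor_S(H)$, because $H$ is characteristic in $T\trianglelefteq M$. Your Borel subgroup $B$ therefore still gives $S=MB=\Nor_S(H)B$.

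\textbf{Necessity.} This half is a plan rather than a proof. All of the content lies in the case analysis you defer. Your three-way summary of the classification (prime index, point stabilizers, a short explicit list) also does not match its actual shape. The classification includes infinite families of classical groups in which the nonsolvable factor contains large subgroups, such as $\OmegaG^-_{2m}(q)$ inside $\OmegaG_{2m+1}(q)$. You give no evidence that ``smallest centralizer plus order or divisibility'' disposes of these families.

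The idea that makes the analysis tractable in the paper is the opposite choice of involution: take $z$ central in a Sylow $2$-subgroup $P$ of $S$. Then $P\leq\Cen_S(z)$, so the Hall-subgroup argument of Lemma~\ref{lem:pprime-factor} replaces the solvable supplement by one of odd order. The classification can then be read with sharp filters:
\begin{itemize}
\item $|S:\Cen_S(z)|$ is odd;
\item $z\in\mathbf{Z}(\Cen_S(z))\cap\Op_2(\Cen_S(z))$;
\item $z$ centralizes the normal subgroup $K_0$ of the nonsolvable factor given in the tables.
\end{itemize}
These filters leave only $\PSL_2(q)$ and the family $\OmegaG_{2m+1}(q)$ with $K_0=\OmegaG^-_{2m}(q)$. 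The latter needs a second involution $y=-I_2\oplus I_{2m-1}$, whose centralizer is not a factor in any factorization with a solvable factor.

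Your order arguments excluding $\PSL_2(q)$ with $q$ even or $q\equiv1\pmod4$ are fine in spirit. Note, however, that $\Cen_S(t)$ there is a Sylow $2$-subgroup of order $q$, respectively $\Dih_{q-1}$, not a Sylow normalizer.
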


The paper is organized as follows. In Section~\ref{sec:almost}, we prove the
almost-simple restriction in Proposition~\ref{prop:almost} and the
simple-group classification in Theorem~\ref{thm:simple}. In
Section~\ref{sec:main-proof}, we prove Theorem~\ref{thm:main} and
Corollary~\ref{cor:solvability}.

\section{The almost simple case}\label{sec:almost}

For a prime $p$ and an integer $n\geq1$, denote by $n_p$ the largest $p$-power dividing $n$, and denote $n_{p'}=n/n_p$.
A \emph{factorization} of a group $G$ is an expression $G=HK$ with subgroups $H$ and $K$ of $G$.

\begin{lemma}\label{lem:pprime-factor}
Let $p$ be a prime, and let $G$ be a finite group with a factorization $G=HK$ such that $H$ is solvable and $K$ contains a
Sylow $p$-subgroup of $G$. Then there is a solvable $p'$-subgroup $Q\leq H$ such that $G=QK$.
\end{lemma}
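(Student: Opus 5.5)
Take $Q$ to be a Hall $p'$-subgroup of the solvable group $H$, which exists by Hall's theorem, and then verify that $G=QK$ by an order count. So $Q$ is a solvable $p'$-subgroup of $H$ with $|Q|=|H|_{p'}$, and $H=PQ$ for any Sylow $p$-subgroup $P$ of $H$, since $P\cap Q=1$ and $|P||Q|=|H|$.

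The first step is to bound $|QK|$ from below. We have
\[
|QK|=\frac{|Q||K|}{|Q\cap K|}.
\]
Because $Q\cap K\le Q\cap H\cap K$ is a $p'$-subgroup of $H\cap K$, we get $|Q\cap K|\le |H\cap K|_{p'}$. Hence
\[
|QK|\ge \frac{|H|_{p'}\,|K|}{|H\cap K|_{p'}}.
\]
The second step is to compare this with $|G|$. From $G=HK$ we have $|G|=|H||K|/|H\cap K|$. Taking $p'$-parts gives $|G|_{p'}=|H|_{p'}|K|_{p'}/|H\cap K|_{p'}$. Since $K$ contains a Sylow $p$-subgroup of $G$, $|K|_p=|G|_p$. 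Therefore
\[
\frac{|H|_{p'}|K|}{|H\cap K|_{p'}}=|G|_{p'}\,|K|_p=|G|_{p'}|G|_p=|G|.
\]
Combining the two steps gives $|QK|\ge|G|$. As $QK\subseteq G$, this forces $QK=G$.

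The only point needing care is the inequality $|Q\cap K|\le|H\cap K|_{p'}$. It holds because $Q\cap K$ is a $p'$-group inside $H\cap K$, so its order divides $|H\cap K|$ and is coprime to $p$. The remainder is bookkeeping with $p$-parts and $p'$-parts, and does not need $QK$ to be a subgroup in advance, since $|QK|=|Q||K|/|Q\cap K|$ holds for sets. Solvability of $H$ is used only to guarantee that the Hall $p'$-subgroup $Q$ exists, and $Q$ is solvable as a subgroup of $H$.
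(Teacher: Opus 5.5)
Your proof is correct and is essentially the paper's order-counting argument: the paper sets $D=H\cap K$, chooses a Hall $p'$-subgroup $R$ of $D$ and a Hall $p'$-subgroup $Q\geq R$ of $H$ so that $D\cap Q=R$ exactly, and deduces $H=QD$ and hence $G=QK$. Your observation that the inequality $|Q\cap K|\le|H\cap K|_{p'}$ by itself already forces $|QK|\ge|G|$ shows that this choice of $Q$ is unnecessary, so any Hall $p'$-subgroup of $H$ works and only the existence part of Hall's theorem is needed.
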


\begin{proof}
Let $D=H\cap K$. Since $K$ contains a Sylow $p$-subgroup of $G$, the index $|H:D|=|G:K|$ is coprime to $p$, and hence $|H|_p=|D|_p$. Since $D\leq H$ is solvable, there exists a Hall $p'$-subgroup $R$ of $D$, and $R$ is contained in some Hall $p'$-subgroup $Q$ of $H$. Now $R\leq D\cap Q$, while $|D\cap Q|\leq |D|_{p'}=|R|$. Hence $D\cap Q=R$, and so
\[
  |QD|=\frac{|Q||D|}{|D\cap Q|}=\frac{|H|_{p'}|D|}{|D|_{p'}}=|H|_{p'}|D|_p=|H|_{p'}|H|_p=|H|.
\]
Thus $H=QD$, and consequently $G=HK=QK$.
\end{proof}

For a finite nonabelian simple group, an \emph{almost simple} group with \emph{socle} $S$ is a group $G$ such that $S\leq G\leq\Aut(S)$.

\begin{lemma}\label{lem:negative}
Let $X$ be an almost simple group with socle $S=\PSL_2(q)$, where $q\geq4$, such that $\Cen_X(t)$ has a solvable supplement in $X$ for
every involution $t\in S$. Then $q\equiv3\pmod4$.
\end{lemma}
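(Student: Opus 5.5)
The plan is to argue by contradiction. Assume $q\not\equiv3\pmod4$, so that either $q$ is even or $q\equiv1\pmod4$. I will pick one well-chosen involution $t\in S$ and show that $\Cen_X(t)$ has no solvable supplement in $X$.

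\textbf{Choice of $t$ and reduction to an odd-order supplement.} Let $P\in\Syl_2(X)$. Since $S\trianglelefteq X$, the group $P\cap S$ is a nontrivial normal subgroup of $P$, so it meets $\mathbf{Z}(P)$ nontrivially. Choose an involution $t\in\mathbf{Z}(P)\cap S$. Then $P\leq\Cen_X(t)$, so $\Cen_X(t)$ contains a Sylow $2$-subgroup of $X$.

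If $X=Y\Cen_X(t)$ with $Y$ solvable, then Lemma~\ref{lem:pprime-factor} (with $p=2$) gives a solvable subgroup $Q$ of odd order with $X=Q\,\Cen_X(t)$. Hence $|X:\Cen_X(t)|$ divides $|Q|$.

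Since $\Cen_X(t)\cap S=\Cen_S(t)$, we have
\[
|X:\Cen_X(t)|=|X:S\Cen_X(t)|\cdot|S:\Cen_S(t)|,
\]
so $|S:\Cen_S(t)|$ divides $|Q|=|Q\cap S|\cdot|QS/S|$. Moreover $|QS/S|$ divides the odd part of $|\Out(S)|$, and this divides $f$, where $q=p^f$.

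\textbf{Case $q\equiv1\pmod4$.} Here $\Cen_S(t)\cong\Dih_{q-1}$, so $|S:\Cen_S(t)|=q(q+1)/2$.

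Let $r$ be a primitive prime divisor of $p^{2f}-1$. By Zsigmondy's theorem it exists, since the exception $p^{2f}$ with $2f=2$ and $p+1$ a power of $2$ would force $q=p\equiv3\pmod4$. Then $r\mid(q+1)/2$ and $r\equiv1\pmod{2f}$, so $r>f$ and $r\nmid|QS/S|$. Hence $r$ divides $|Q\cap S|$.

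By Dickson's list of subgroups of $\PSL_2(q)$, an odd-order subgroup of order divisible by $r$ lies in a dihedral group $\Dih_{q+1}$. It is therefore cyclic of order dividing $(q+1)/2$, because subfield subgroups and $\Alt_4,\Sym_4,\Alt_5$ are excluded by $r$ and by odd order. Consequently $p\nmid|Q\cap S|$. This forces $q$ to divide $|QS/S|$, which divides $f$. That is impossible since $q=p^f>f$.

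\textbf{Case $q$ even, $q=2^f\geq4$.} Here $\Cen_S(t)$ is the Sylow $2$-subgroup of order $q$, so $|S:\Cen_S(t)|=q^2-1$ must divide $|Q|$.

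For $f\neq3$, take a primitive prime divisor $r$ of $2^{2f}-1$ and a primitive prime divisor $s$ of $2^f-1$. For $f=2$ take $s=3$; for $f=6$ use a prime of $q-1=63$ not dividing $f$, namely $7$. Both $r$ and $s$ exceed $f$, so both divide $|Q\cap S|$.

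By Dickson, an odd-order subgroup of $\PSL_2(q)$ whose order is divisible by $r$ is cyclic inside $\Dih_{2(q+1)}$, so its order divides $q+1$. But $\gcd(q+1,q-1)=1$, so $s$ cannot divide it, a contradiction.

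The case $q=8$ is checked directly. Here $63$ must divide $|Q|$ and $|\Out(S)|=3$, so $|Q\cap S|$ is divisible by $21$. However $\PSL_2(8)$ has no subgroup of order $21$: its maximal subgroups are $2^3{:}7$, $\Dih_{18}$ and $\Dih_{14}$.

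\textbf{Main obstacle.} The hardest step is the careful use of Dickson's classification together with the Zsigmondy exceptions. In particular, one must ensure that odd-order subgroups of $S$ containing a primitive prime divisor are genuinely confined to a single torus normalizer, and that the contribution $|QS/S|$ from field automorphisms cannot absorb the missing prime factors. The small exceptional cases ($q=4,8$, and $f=6$ for $s$) will need to be treated by hand.
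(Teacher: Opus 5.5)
Your proof is correct. It starts exactly as the paper does: you choose an involution $t\in\mathbf{Z}(P)\cap S$ and apply Lemma~\ref{lem:pprime-factor} to get an odd-order solvable $Q$ with $X=Q\,\Cen_X(t)$. After that you take a different route.

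The paper handles both cases by citing the Li--Xia classification of factorizations with a solvable factor~\cite{LX2022}:
\begin{itemize}
\item In the even case it uses the bound $|Q\cap S|\le q+1$ together with $|Q:Q\cap S|\le f$, which yields $q^2-1\le (q+1)f$, a contradiction.
\item For $q\equiv1\pmod4$ it simply observes that no factorization with an odd-order factor and $K\cap S\cong\Dih_{q-1}$ appears in that classification.
\end{itemize}

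You instead make the divisibility $|S:\Cen_S(t)|\mid |Q\cap S|\cdot|QS/S|$ explicit, note that $|QS/S|$ divides $f$, and then use Dickson's list together with Zsigmondy primes. Dickson's list says an odd-order subgroup of $\PSL_2(q)$ is either cyclic inside a torus or contained in a Borel subgroup. This makes the lemma self-contained modulo classical facts about $\PSL_2(q)$, rather than resting on a large classification. The price is the Zsigmondy exceptions ($q=8$, and $f=6$ for $s$), which you handle correctly by hand.

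One phrase is loose: you say $\Alt_4,\Sym_4,\Alt_5$ are ``excluded by odd order'', but these groups do have odd-order subgroups. Those subgroups are cyclic of order $3$ or $5$, so they fall under your cyclic case, and the conclusion is unaffected.

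Zsigmondy is in fact avoidable in both cases:
\begin{itemize}
\item In the even case, the paper's order bound is enough.
\item For $q\equiv1\pmod4$, use $\gcd\bigl(q(q-1)/2,(q+1)/2\bigr)=1$. If $Q\cap S$ lies in a Borel subgroup, then $(q+1)/2$ must divide $|QS/S|$. If $Q\cap S$ is cyclic of order dividing $(q+1)/2$, then $q$ must divide $|QS/S|$. Both are impossible because $|QS/S|$ divides $f$, and both $(q+1)/2$ and $q$ exceed $f$.
\end{itemize}
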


\begin{proof}
Choose $P\in\Syl_2(X)$. Since the nontrivial normal subgroup $P\cap S$ of $P$ meets $\mathbf{Z}(P)$ nontrivially, we may choose an involution $z\in \mathbf{Z}(P)\cap S$. Let
\[
  K=\Cen_X(z).
\]
Then $P\leq K$. By the hypothesis of the lemma, $K$ has a solvable supplement in $X$. Then Lemma~\ref{lem:pprime-factor} gives a solvable subgroup $Q$ of odd order such that
\[
  X=KQ.
\]
To prove the lemma, we exclude the cases when $q$ is even and when $q\equiv1\pmod4$ respectively.

Suppose first that $q=2^f$ is even. Then $\Cen_S(z)$ is elementary abelian of order $q$. Since $Q$ has odd order, the $\PSL_2(q)$ case of~\cite[Theorem~1.1]{LX2022} gives $|Q\cap S|\leq q+1$. Moreover, $|Q:Q\cap S|\leq|\operatorname{Out}(S)|=f$. Hence
\[
  q^2-1=|S:\Cen_S(z)|=|S:K\cap S|\leq|X:K|\leq|Q|\leq(q+1)f<(q+1)(q-1),
\]
a contradiction.

Suppose next that $q\equiv1\pmod4$. Then $K\cap S=\Cen_S(z)\cong\Dih_{q-1}$. Since $Q$ has odd order, the factorization $X=QK$ does not occur in the classification in~\cite[Theorem~1.1]{LX2022}, again a contradiction.
\end{proof}

The key result of this section is as follows.

\begin{proposition}\label{prop:almost}
Let $X$ be an almost simple group with socle $S$ such that $\Cen_X(t)$ has a solvable supplement in $X$ for every involution $t\in S$. Then $S\cong\PSL_2(q)$ for some prime power $q\geq7$ with $q\equiv3\pmod4$.
\end{proposition}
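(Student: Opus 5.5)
The plan mirrors the proof of Lemma~\ref{lem:negative}: reduce to a factorization of $X$ by a solvable odd-order subgroup and the centralizer of a $2$-central involution, and then invoke the classification of factorizations of almost simple groups with a solvable factor in~\cite{LX2022}.

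Choose $P\in\Syl_2(X)$ and an involution $z\in\mathbf{Z}(P)\cap S$; this is possible because $P\cap S$ is a nontrivial normal subgroup of $P$. Put $K=\Cen_X(z)$, so $P\leq K$. By hypothesis $K$ has a solvable supplement in $X$. Since $K$ contains a Sylow $2$-subgroup of $X$, Lemma~\ref{lem:pprime-factor} with $p=2$ yields a solvable subgroup $Q$ of odd order such that $X=QK$. Replacing $K$ by a maximal subgroup $M\geq K$ of $X$ not containing $S$ (note $K\not\geq S$ since $z\notin\mathbf{Z}(S)$), we get a factorization $X=QM$ in which $Q$ is solvable of odd order and $M$ is core-free and contains both a Sylow $2$-subgroup of $X$ and the centralizer of a $2$-central involution of $S$.

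Next, apply~\cite[Theorem~1.1]{LX2022}, which lists all factorizations $X=HK$ of almost simple groups with $H$ solvable. I would go through that list under two constraints. First, $H=Q$ has odd order, so in particular the $S$-part of $Q$ has odd order. Second, $K$ contains $\Cen_S(z)$ for a $2$-central involution $z$, so $|X:K|$ is odd and at most $|Q|$. For alternating socles, $\Alt_n$ with $n\geq5$, the odd-order factors are tiny (products of odd cycles), and $|X:\Cen_X(z)|$ for $z$ a double transposition, or a product of $2\lfloor n/4\rfloor$ transpositions, grows too fast. For the sporadic groups and exceptional groups, the list of~\cite{LX2022} contains no factorization whose solvable factor has odd order together with an involution centralizer in the other factor; one checks the finitely many cases and the few families, for example $G_2(q)$ with the factor $\SL_3(q).2$, whose centralizers of $2$-central involutions have the wrong shape. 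For classical groups other than $\PSL_2(q)$, the solvable factor in the infinite families is typically contained in $\mathrm{\Gamma L}_1(q^n)$ or a Borel-type subgroup. Here I will compare $|Q|$, at most roughly $(q^n-1)n|\mathrm{Out}(S)|$ from the odd part of a Singer-cycle normalizer, with $|S:\Cen_S(z)|$, which is of order at least $q^{n^2/4}$ or similar and hence too large except in small rank. The remaining small exceptional cases, such as $\PSL_3(q)$, $\PSU_3(q)$, $\Sp_4(q)$ and the sporadic entries in small dimension, are then handled by direct order comparison.

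This leaves $S=\PSL_2(q)$. The cases $q\leq 5$ are excluded here: $\PSL_2(4)\cong\PSL_2(5)$, and $\PSL_2(2)$, $\PSL_2(3)$ are not simple. For $q\geq4$, Lemma~\ref{lem:negative} gives $q\equiv3\pmod4$, and since $q=3$ is not allowed, $q\geq7$.

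I expect the main obstacle to be the case-by-case sweep through the list of~\cite{LX2022} for classical groups. There one must check carefully that none of the infinite families, especially those in which the solvable factor is a Singer-type subgroup of odd order, can be paired with a subgroup containing a $2$-central involution centralizer. My first approach will be the index comparison $|S:\Cen_S(z)|\leq|Q|$, with $|Q|$ bounded by the odd part of the relevant solvable factor. Only if that crude bound fails in small rank will I resort to explicit structural arguments.
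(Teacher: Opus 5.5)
Your reduction to $X=QK$, with $Q$ solvable of odd order and $K=\Cen_X(z)$ for a $2$-central involution $z$, is the same as the paper's. The gap is in the finish: sweeping the list of~\cite{LX2022} under only these two constraints cannot work, because there is a family in which they are genuinely satisfied.

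Take $S=\OmegaG_{2m+1}(q)$ with $q\equiv3\pmod4$ and $m\geq3$ odd (for instance $\OmegaG_7(3)$). Let $z$ act as $-1$ on a minus-type hyperplane $U$ and trivially on $U^\perp$.
\begin{itemize}
\item The spinor norm of $z$ is the discriminant of $U$, which is a square under these congruences, so $z\in S$.
\item $\Cen_S(z)=(\mathrm{O}_1(q)\times\mathrm{O}^-_{2m}(q))\cap S\trianglerighteq\OmegaG^-_{2m}(q)$ is the stabilizer of a nonsingular point. Its index $q^m(q^m-1)/2$ is odd, so $z$ is $2$-central.
\item This is row~7 of Table~1.1 of~\cite{LX2022}. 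The unipotent radical of the parabolic $P_m$, extended by the square of a Singer cycle of its Levi factor $\mathrm{GL}_m(q)$, is solvable of odd order $q^{m(m+1)/2}(q^m-1)/2$, hence lies in $S$. It is transitive on these nonsingular points.
\end{itemize}
So $S=Q\,\Cen_S(z)$ really occurs with $Q$ of odd order, and no filter built from $z$ alone can exclude this family.

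Your order heuristics fail exactly here. In odd characteristic the solvable factor can contain a large unipotent subgroup of odd order, so $|Q|$ is not bounded by the odd part of a Singer-cycle normalizer. Also, $|S:\Cen_S(z)|$ is about $q^{2m}$, not about $q^{n^2/4}$.

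The missing idea is to apply the hypothesis to a second involution.
\begin{itemize}
\item The paper first keeps the filters coming from $z$: $K\cap S=\Cen_S(z)$, $z\in\mathbf{Z}(\Cen_S(z))\cap\Op_2(\Cen_S(z))$, and $z\in\Cen_S(K_0)$ for the subgroup $K_0$ named in the tables. These dispose of everything except $\PSL_2(q)$ and this orthogonal row.
\item It then uses $y=-I_2\oplus I_{2m-1}$ on a nondegenerate $2$-space of suitable type, chosen so that $y\in S$ and $\Cen_S(y)=(\mathrm{O}_2^\varepsilon(q)\times\mathrm{O}_{2m-1}(q))\cap S$. By~\cite{LX2022} this centralizer has no solvable supplement in $X$.
\end{itemize}
Your order comparisons for the alternating groups and the small exceptional entries are plausible, and your endgame via Lemma~\ref{lem:negative} is fine. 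Without this second involution, however, the argument cannot be completed.
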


\begin{proof}
By Lemma~\ref{lem:negative}, it suffices to prove that $S\cong\PSL_2(q)$ for some prime power $q\geq4$.
Take $P\in\Syl_2(X)$. Since $S\trianglelefteq X$, the subgroup $P\cap S$ is a Sylow $2$-subgroup of $S$, which is nontrivial by the Feit--Thompson odd-order theorem~\cite{FT1963}. Moreover, $P\cap S\trianglelefteq P$, and every nontrivial normal subgroup of a finite \(p\)-group meets its center nontrivially. Hence $\mathbf{Z}(P)\cap S\neq1$. Choose an involution $z\in \mathbf{Z}(P)\cap S$, and let
\[
  K=\Cen_X(z).
\]
Then $P\leq K$. By the hypothesis of the proposition, there is a solvable subgroup $H\leq X$ such that $X=HK$. This together with Lemma~\ref{lem:pprime-factor}, applied with $p=2$, gives a subgroup $Q$ of odd order in $H$ such that
\begin{equation}\label{eq:KQ}
  X=QK.
\end{equation}
We may then apply~\cite[Theorem~1.1]{LX2022} to the factorization~\eqref{eq:KQ}, with solvable factor $Q$ and core-free factor $K$.

Since $z\in \mathbf{Z}(P)$, the Sylow $2$-subgroup $P\cap S$ of $S$ is contained in $\Cen_S(z)$, and so $|S:\Cen_S(z)|$ is odd. Since $z$ is central in $\Cen_S(z)$, the largest normal $2$-subgroup of $\Cen_S(z)$ contains $\langle z\rangle$. Hence we have
\begin{equation}\label{eq:local}
  K\cap S=\Cen_S(z),\ \
  |S:\Cen_S(z)|_2=1,\ \
  1\neq\langle z\rangle\leq \mathbf{Z}(\Cen_S(z))\cap\Op_2(\Cen_S(z)).
\end{equation}
Moreover, for each $K_0\leq\Cen_S(z)$,
\begin{equation}\label{eq:centralizes-K0}
  z\in\Cen_S(K_0).
\end{equation}
We use~\eqref{eq:local}--\eqref{eq:centralizes-K0} as filters to read off candidates for the factorization~\eqref{eq:KQ} from~\cite[Theorem~1.1]{LX2022}. We follow the classification order in~\cite[Theorem~1.1]{LX2022}.
Thus the cases below are exhaustive.

\smallskip
\textsf{Case~1: both factors solvable.}
The factorizations $X=QK$ with both $Q$ and $K$ solvable are listed in Table~4.1 of~\cite{LX2022}.
Since $Q$ has odd order, besides the family $\PSL_2(q)$, this leaves only rows~4--5, 7--8, and~12 of that table, and hence the four possible socles $S$:
\[
  \PSL_3(3),\ \ \PSL_3(8),\ \ \PSU_3(8),\ \ \M_{11}.
\]
In each of these rows,~\cite[Table~4.1]{LX2022} in particular gives a required divisor $d$ of $|K\cap S|$. However, a direct computation in \textsc{Magma}~\cite{BCP1997} gives $|\Cen_S(z)|$, recorded below, which is not divisible by $d$. This contradicts the condition $K\cap S=\Cen_S(z)$ in~\eqref{eq:local}.
\[
\begin{array}{cccc}
\toprule
S&\text{Rows}&|\Cen_S(z)|&\text{Required divisor }d\\
\midrule
\PSL_3(3)&4,5&2^4\cdot3&3^2\\
\PSL_3(8)&7&2^9\cdot7&2^9\cdot7^2\\
\PSU_3(8)&8&2^9\cdot3&7\\
\M_{11}&12&2^4\cdot3&3^2
\\
\bottomrule
\end{array}
\]

\smallskip
\textsf{Case~2: alternating socles.}
The candidates for $X=QK$ are listed in~\cite[Proposition~4.3]{LX2022}.
The groups $\Alt_5$ and $\Alt_6$ are isomorphic to $\PSL_2(4)$ and $\PSL_2(9)$, respectively. Let $S=\Alt_n$ with $n\geq7$. In the infinite families of~\cite[Proposition~4.3]{LX2022}, a natural subgroup $\Alt_{n-k}$ is contained in $K\cap S$ for some $k\leq3$. Therefore, $z$ centralizes $\Alt_{n-k}$. Since $n-k\geq4$, it follows that $\Cen_{\Alt_n}(\Alt_{n-k})=\Alt_k$ has odd order for $k\leq3$. This is incompatible with the involution $z\in S$, excluding the infinite families of~\cite[Proposition~4.3]{LX2022}. Now the only remaining candidate in~\cite[Proposition~4.3]{LX2022} has $S=\Alt_8$ and $K\cong\AGL_3(2)$. However, here $K$ has no nontrivial central $2$-element, contradicting $z\in \mathbf{Z}(K)$.

\smallskip
\textsf{Case~3: sporadic socles.}
Here the candidates are listed in~\cite[Proposition~4.4]{LX2022}. However, $\mathbf{Z}(K)=1$ holds for all the possibilities of $K$, contradicting $z\in \mathbf{Z}(K)$.

\smallskip
\textsf{Case~4: the infinite families for classical socles.}
This is the case where $(S,Q\cap S,K\cap S)$ lies in Table~1.1 of~\cite{LX2022}. Let $M=\Nor_S(K\cap S)$, the maximal
subgroup specified in~\cite[Remark~1.2(i)]{LX2022}. Since $K\cap S\leq M$, the index $|S:M|$ divides $|S:K\cap S|$, and hence is odd by~\eqref{eq:local}. This excludes rows~2--6 and~9 of the table.
Let $K_0$ be the subgroup in the ``$K\cap L\trianglerighteq$'' column of Table~1.1. In row~1 with $n\geq3$, and also in row~8, the fact $\Cen_S(K_0)=1$ contradicts $1\neq z\in\Cen_S(K_0)$ from~\eqref{eq:centralizes-K0}. The case $n=2$ in row~1 is precisely the family $\PSL_2(q)$. It remains only to exclude row~7.

In this row, $S=\OmegaG_{2m+1}(q)$ and $K_0=\OmegaG^-_{2m}(q)$, where $m\geq3$ and $q$ is odd. Choose a nondegenerate $2$-space of suitable spinor type and let
\[
  y=-I_2\oplus I_{2m-1}\in S
\]
corresponding to this $2$-space and its orthogonal complement. Then, for a suitable sign $\varepsilon$,
\begin{equation}\label{eq:orthogonal-r-parts}
  \Cen_S(y)=\bigl(\mathrm O_2^\varepsilon(q)\times\mathrm O_{2m-1}(q)\bigr)\cap S.
\end{equation}
By the hypothesis of the proposition, there is a solvable subgroup $L\leq X$ such that
\[
X=L\Cen_X(y).
\]
However,~\cite[Theorem~1.1]{LX2022} shows that there is no such factorization with $\Cen_X(y)\cap S=\Cen_S(y)$ satisfying~\eqref{eq:orthogonal-r-parts}, a contradiction.

\smallskip
\textsf{Case~5: the finite table for classical socles.}
This is the case where $(S,Q\cap S,K\cap S)$ lies in Table~1.2 of~\cite{LX2022}.
Rows~1--5 of the table already have socle $\PSL_2(q)$. By~\eqref{eq:local}, the index $|S:K\cap S|$ is odd.
This restricts the candidates to be as listed below, each excluded by the recorded contradiction.

\begin{center}
\small
\begin{tabular}{lll}
\toprule
Rows & Possibilities for $K\cap S$ & Contradiction to \eqref{eq:local} \\
\midrule
9, 10, 11
& $2^6{:}(\Sym_3\times\PSL_3(2))$ or $2^4{:}\Alt_5$
& $\mathbf{Z}(K\cap S)\cap\Op_2(K\cap S)=1$ \\
21
& $2^{12}{:}\SL_2(64){.}7$
& $\mathbf{Z}(K\cap S)\cap\Op_2(K\cap S)=1$ \\
23, 28
& $\Sp_6(2)$ or $\OmegaG^+_8(2)$
& $\Op_2(K\cap S)=1$. \\
\bottomrule
\end{tabular}
\end{center}
\end{proof}

\begin{proof}[\rm\textbf{Proof of Theorem~\ref{thm:simple}}]
Suppose that $S$ has property $\mathrm{SSN}_2$. For every involution $t\in S$, we have $\Nor_S(\langle t\rangle)=\Cen_S(t)$. Hence Proposition~\ref{prop:almost}, applied with $X=S$, gives $S\cong\PSL_2(q)$ for some prime power $q\geq7$ with $q\equiv3\pmod4$.

Conversely, let $S=\PSL_2(q)$, where $q\geq7$ and $q\equiv3\pmod4$. To verify that $S$ has property $\mathrm{SSN}_2$, it suffices to consider a nontrivial cyclic $2$-subgroup $H=\langle h\rangle$ of $S$. We use the standard description of subgroups of $S$; see~\cite[Chapter~II, Section~8]{Huppert2025}. The nonsplit maximal torus $T$ of $S$ is a cyclic group of order $(q+1)/2$, such that $M\coloneqq\Nor_S(T)$ is a dihedral group of order $q+1$. Since $|M|_2=|S|_2$, we may assume that, up to conjugation in $S$, the subgroup $H$ is contained in $M$ and is further contained in $T$.
Thus, $\Nor_S(H)$ is contained in $M$ but not in any proper subgroup of $S$ that properly contains $M$. Consequently,
\[
\Nor_S(H)=M.
\]
Let $K$ be a Borel subgroup of $S$. Then $K$ is solvable and $|K|=q(q-1)/2$. From
\[
  \gcd\left(q+1,\frac{q(q-1)}2\right)=1,
\]
we deduce that $M\cap K=1$, and so $|MK|=|M||K|=(q+1)q(q-1)/2=|S|$.
It follows that $S=MK$, meaning that $\Nor_S(H)=M$ has a solvable supplement $K$ in $S$. This completes the proof.
\end{proof}

\section{Proof of Theorem~\ref{thm:main} and Corollary~\ref{cor:solvability}}\label{sec:main-proof}

We begin with two reduction lemmas needed for the induction.

\begin{lemma}\label{lem:quotient}
For every prime $p$, property $\mathrm{SSN}_p$ is inherited by quotient groups.
\end{lemma}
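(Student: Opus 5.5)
The plan is to take $N\trianglelefteq G$ with $G$ satisfying $\mathrm{SSN}_p$, write $\overline{G}=G/N$ and use bars for images, and show that $\overline{G}$ satisfies $\mathrm{SSN}_p$. Fix a cyclic $p$-subgroup $\overline{C}$ of $\overline{G}$. The first step is to lift $\overline{C}$ to a cyclic $p$-subgroup of $G$. Write $\overline{C}=\langle gN\rangle$ with $gN$ of order $p^a$. Decompose $g=g_pg_{p'}$ into commuting $p$-part and $p'$-part. Then $g_{p'}N$ is a $p'$-element of the $p$-group $\overline{C}$, so it is trivial, and hence $g_pN=gN$. Setting $C=\langle g_p\rangle$ gives a cyclic $p$-subgroup of $G$ with $CN/N=\overline{C}$.

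The second step is to compare normalizers. If $x\in\Nor_G(C)$, then $xN$ normalizes $CN/N=\overline{C}$. Hence
\[
\Nor_G(C)N/N\leq\Nor_{\overline{G}}(\overline{C}).
\]
By $\mathrm{SSN}_p$ in $G$ there is a solvable subgroup $H$ with $G=\Nor_G(C)H$. Passing to the quotient gives
\[
\overline{G}=\bigl(\Nor_G(C)N/N\bigr)\bigl(HN/N\bigr)\subseteq\Nor_{\overline{G}}(\overline{C})\,\overline{H}.
\]
Here $\overline{H}=HN/N\cong H/(H\cap N)$ is solvable, so $\overline{H}$ is a solvable supplement to $\Nor_{\overline{G}}(\overline{C})$ in $\overline{G}$.

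No real obstacle is expected. The only point needing care is the lifting step: an arbitrary preimage of a generator need not have $p$-power order, and this is exactly why we pass to its $p$-part.
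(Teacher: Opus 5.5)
Your proof is correct and follows essentially the same route as the paper: lift $\overline{C}$ to a cyclic $p$-subgroup $C$ with $CN/N=\overline{C}$, use $\Nor_G(C)N/N\leq\Nor_{\overline{G}}(\overline{C})$, and project a solvable supplement. The only difference is the lifting step: the paper takes a Sylow $p$-subgroup of the full preimage of $\overline{C}$ and picks a $p$-element in it mapping to the generator, whereas you take the $p$-part of an arbitrary preimage of the generator. The two devices are equally valid.
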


\begin{proof}
Let $G$ be a finite group with property $\mathrm{SSN}_p$, let $N$ be a normal subgroup of $G$, and let $\overline{\phantom{w}}\colon G\to G/N$ be the quotient homomorphism.
Take an arbitrary cyclic $p$-subgroup $\langle\overline g\rangle$ of $\overline G$. Let $K$ be the full pre-image of $\langle\overline g\rangle$ in $G$, and let $P\in\Syl_p(K)$. Then $\overline P$ is a Sylow $p$-subgroup of $\overline K=\langle\overline g\rangle$, and so $\overline P=\langle\overline g\rangle$. Hence there exists $c\in P$ with $\overline c=\overline g$.
Since $G$ has property $\mathrm{SSN}_p$, there exists a solvable subgroup $H\leq G$ such that $G=\Nor_G(\langle c\rangle)H$. It follows that
\[
\overline G=\overline{\Nor_G(\langle c\rangle)}\,\overline H
\leq\Nor_{\overline G}(\langle\overline c\rangle)\overline H
=\Nor_{\overline G}(\langle\overline g\rangle)\overline H\leq\overline G.
\]
Thus, $\overline G=\Nor_{\overline G}(\langle\overline g\rangle)\overline H$, meaning that $\Nor_{\overline G}(\langle\overline g\rangle)$ has a solvable supplement $\overline H$.
\end{proof}

\begin{lemma}\label{lem:minimal-normal}
Let $p$ be a prime, let $G$ be a finite group with property $\mathrm{SSN}_p$ and a minimal normal subgroup $M=S_1\times\cdots\times S_r$, where $S_1\cong\cdots\cong S_r$ is nonabelian simple, and let
\[
N=\Nor_G(S_1),\ \ C=\Cen_N(S_1),\ \ X=N/C.
\]
Then $X$ is almost simple with socle $S_1C/C\cong S_1$, and for every cyclic $p$-subgroup $H\leq S_1$, the normalizer $\Nor_X(HC/C)$ has a solvable supplement in $X$.
\end{lemma}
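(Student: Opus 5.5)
The almost-simple part is standard. Conjugation gives a homomorphism $N\to\Aut(S_1)$ with kernel $C$, so $X=N/C$ embeds in $\Aut(S_1)$. Since $S_1$ is nonabelian simple, $S_1\cap C=\mathbf{Z}(S_1)=1$. Hence $S_1C/C\cong S_1$, and it maps onto $\operatorname{Inn}(S_1)$. Therefore $S_1C/C\le X\le\Aut(S_1)$, and $X$ is almost simple with socle $S_1C/C$.

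For the supplement property, fix a cyclic $p$-subgroup $H\le S_1$. Because $G$ has $\mathrm{SSN}_p$, there is a solvable $R\le G$ with $G=\Nor_G(H)R$. Two facts will be used.

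First, $\Nor_G(H)\le N$. Indeed, if $g$ normalizes $H\neq1$, then $S_1^g$ is a component $S_j$ of $M$ with $H=H^g\le S_1\cap S_j$. Since distinct $S_i$ intersect trivially, $S_j=S_1$. The case $H=1$ is trivial, since then $\Nor_X(1)=X$.

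Second, by Dedekind's law, $N=N\cap \Nor_G(H)R=\Nor_G(H)(N\cap R)$, and $N\cap R$ is solvable. Passing to $X=N/C$ gives
\[
X=\overline{\Nor_G(H)}\;\overline{N\cap R}\le \Nor_X(HC/C)\,\overline{N\cap R},
\]
because the image of anything normalizing $H$ normalizes $HC/C$. The solvable group $\overline{N\cap R}$ is therefore the required supplement.

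The only delicate point is the containment $\Nor_G(H)\le N$ used in the Dedekind step. It relies on $H\neq1$ lying inside the single factor $S_1$ and on $G$ permuting the factors $S_i$, which holds because they are exactly the minimal normal subgroups of $M$. Everything else is routine. No appeal to Lemma~\ref{lem:quotient} is needed, although the final step mirrors its argument.
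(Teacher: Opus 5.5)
Your proof is correct and matches the paper's argument: $\Nor_G(H)\le N$ because $G$ permutes the factors $S_i$ and distinct factors meet trivially, then Dedekind's law gives $N=\Nor_G(H)(N\cap R)$, and passing to $X=N/C$ finishes. Your reason why $G$ permutes the $S_i$ (they are exactly the minimal normal subgroups of $M$) is a detail the paper leaves implicit.
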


\begin{proof}
Since $C\cap S_1=\mathbf{Z}(S_1)=1$, we have $S_1C/C\cong S_1$. Then it follows from
\[
S_1C/C\trianglelefteq X\lesssim\Aut(S_1)
\]
that $X$ is almost simple with socle $S_1C/C\cong S_1$. To prove that $\Nor_X(HC/C)$ has a solvable supplement in $X$, we may assume without loss of generality that $H\neq1$. If $g\in\Nor_G(H)$, then
\[
  1\neq H=H^g\leq S_1\cap S_1^g.
\]
Conjugation by $G$ permutes the direct factors of $M$, and distinct direct
factors intersect trivially. Therefore, $S_1^g=S_1$ for each $g\in\Nor_G(H)$. This shows that $\Nor_G(H)\leq N$.

Since $G$ has property $\mathrm{SSN}_p$, there exists a solvable subgroup $K\leq G$ such that $G=\Nor_G(H)K$. Then since $\Nor_G(H)\leq N$, the Dedekind modular law gives
\[
  N=\Nor_G(H)(K\cap N).
\]
Moreover, $\Nor_G(H)C/C\leq\Nor_X(HC/C)$. This yields
\[
  X=\Nor_X(HC/C)\bigl((K\cap N)C/C\bigr),
\]
which means that $\Nor_X(HC/C)$ has a solvable supplement in $X$.
\end{proof}

\begin{proof}[\rm\textbf{Proof of Theorem~\ref{thm:main}}]
We argue by induction on $|G|$. Let $M$ be a minimal normal subgroup of $G$. By Lemma~\ref{lem:quotient}, $G/M$ has property $\mathrm{SSN}_2$. Hence, by induction, every nonabelian composition factor of $G/M$ is isomorphic to $\PSL_2(q)$ for some prime power $q\geq7$ with $q\equiv3\pmod4$. If $M$ is abelian, all composition factors contributed by $M$ are cyclic of
prime order. Otherwise
\[
  M=S_1\times\cdots\times S_r
\]
for pairwise isomorphic nonabelian simple groups $S_1,\ldots,S_r$. In this case, letting
\[
  X=\Nor_G(S_1)/\Cen_{\Nor_G(S_1)}(S_1),
\]
Lemma~\ref{lem:minimal-normal} asserts that $X$ is almost simple with socle isomorphic to $S_1$, and the normalizer in $X$ of every nontrivial cyclic $2$-subgroup of its socle has a solvable supplement. Note that $\Nor_X(\langle t\rangle)=\Cen_X(t)$ for every involution $t$ in the socle of $X$. Hence Proposition~\ref{prop:almost} gives $S_1\cong\PSL_2(q)$ with $q\geq7$ and $q\equiv3\pmod4$.
Therefore, in either case, all nonabelian composition factors of $M$ have the required form, and induction completes the proof.
\end{proof}

\begin{proof}[\rm\textbf{Proof of Corollary~\ref{cor:solvability}}]
Clearly,~\ref{item:solvability-solvable} implies~\ref{item:solvability-section}, and~\ref{item:solvability-section} implies~\ref{item:solvability-composition-factor}. Moreover, since $G$ has property $\mathrm{SSN}_2$, we conclude from Theorem~\ref{thm:main} that~\ref{item:solvability-composition-factor} implies~\ref{item:solvability-solvable}.
\end{proof}

\section*{Acknowledgements}

The first author was supported by the National Natural Science Foundation of China (grant no.~12471015) and the Natural Science Foundation of Guangdong Province (grant no.~2025A1515012321).

\end{document}